\title[]{A note on automorphisms and birational transformations of holomorphic symplectic manifolds}
\author{Samuel Boissi\`ere}
\address{Laboratoire J.A.Dieudonn\'e UMR CNRS 6621,
         Universit\'e de Nice Sophia-Antipolis, Parc Valrose, F-06108 Nice}
\email{Samuel.Boissiere@unice.fr}
\urladdr{http://math.unice.fr/$\sim$sb/}
\author{Alessandra Sarti}
\address{Laboratoire de Math\'ematiques et Applications, UMR CNRS 6086,
			Universit\'e de Poitiers, T\'el\'eport 2, Boulevard Marie et Pierre Curie,
			F-86962 Futuroscope Chasseneuil}
\email{sarti@math.univ-poitiers.fr}
\urladdr{http://www-math.sp2mi.univ-poitiers.fr/$\sim$sarti/}
\date{\today}
\subjclass{14C05}
\keywords{Hilbert scheme, automorphisms, holomorphic symplectic varieties}
\newtheorem{theorem}{Theorem}
\newtheorem{lemma}{Lemma}
\newtheorem{proposition}{Proposition}
\newtheorem{remark}{Remark}
\newtheorem{question}{Question}
\DeclareMathOperator{\Aut}{Aut}
\DeclareMathOperator{\Bir}{Bir}
\DeclareMathOperator{\GL}{GL}
\DeclareMathOperator{\Orth}{O}
\DeclareMathOperator{\Mon}{Mon}
\DeclareMathOperator{\Pic}{Pic}
\DeclareMathOperator{\id}{id}
\DeclareMathOperator{\NS}{NS}
\DeclareMathOperator{\Tr}{T}
\DeclareMathOperator{\rank}{rk}
\DeclareMathOperator{\Pef}{Pef}
\DeclareMathOperator{\Hdg}{Hdg}
\DeclareMathOperator{\Ker}{Ker}
\DeclareMathOperator{\im}{Im}
\DeclareMathOperator{\Exc}{Exc}
\newcommand{\Pex}{\mathcal{P}ex}
\newcommand{\cA}{\mathcal{A}}
\newcommand{\cC}{\mathcal{C}}
\newcommand{\cD}{\mathcal{D}}
\newcommand{\cK}{\mathcal{K}}
\newcommand{\cL}{\mathcal{L}}
\newcommand{\cO}{\mathcal{O}}
\newcommand{\cS}{\mathcal{S}}
\newcommand{\IC}{\mathbb{C}}
\newcommand{\IR}{\mathbb{R}}
\newcommand{\IZ}{\mathbb{Z}}
\newcommand{\IP}{\mathbb{P}}
\newcommand{\kS}{\mathfrak{S}}
\newcommand{\Peftr}{\Pef_{\rm{tr}}}
\newcommand{\ie}{{\it i.e. }}
\newcommand{\loccit}{{\it loc. cit.}}
\begin{document}

\begin{abstract} 
We give a necessary and sufficient condition for an automorphism of the Hilbert scheme of points on a K3 surface (non necessarily algebraic) to be induced by an automorphism of the surface. We prove furthermore that the group of birational transformations of a projective irreducible holomorphic symplectic manifold is finitely generated.
\end{abstract}

\maketitle

\section*{Introduction}

Compact irreducible holomorphic symplectic manifolds are higher dimensional analogues of K3 surfaces and share many of their well-known properties. Their second integral cohomology space carries a natural weight-two Hodge structure and, due to a result of Beauville~\cite{Beauvillec1nulF}, it can be endowed with a natural non-degenerate quadratic form generalizing the intersection pairing of a K3 surface. 

If $X$ is a compact irreducible holomorphic symplectic manifold, the group $\Aut(X)$ of its biholomorphic automorphisms is discrete and any automorphism $f\in\Aut(X)$ induces by pull-pack an isometry of $H^2(X,\IZ)$ for the quadratic form $q_X$, yielding a natural map:
$$
\Phi\colon\Aut(X)\longrightarrow \Orth(H^2(X,\IZ),q_X), f\mapsto f^*.
$$
It is an important issue to understand to what extent an automorphism of $X$ is determined by the isometry induced on $H^2(X,\IZ)$.

If $X$ is a K3 surface, the strong Torelli theorem of Burns--Rapoport~\cite{BurnsRapoport} gives a precise answer: the map $\Phi$ is injective and for every isometry $\phi\in\Orth(H^2(X,\IZ),q_X)$, which is an isomorphism of integral Hodge structures and maps a K\"ahler class of $X$ to a K\"ahler class, there exists an automorphism $f\in\Aut(X)$ such that $f^*=\phi$. Using this theorem and lattice theory results, Nikulin~\cite{Nikulin} obtained an essentially complete understanding of the finite abelian automorphism groups of K3 surfaces. These results have been extended to non abelian groups by Mukai~\cite{Mukai}, and to infinite order automorphisms by McMullen~\cite{McMullen}.
In higher dimension, Debarre~\cite{Debarre} and Namikawa~\cite{Namikawa} provided counter-examples to various analogues of the global Torelli theorem. However, Verbitsky~\cite{Verbitsky} and Markman~\cite{MarkmanTorelli} recently proved the following version of the Global Torelli theorem.  Denote by $\Mon^2(X)\subset\GL(H^2(X,\IZ))$ the group of monodromy operators. The following version of the global Torelli theorem holds: for every $\phi\in\Mon^2(X)$ which is an isomorphism of integral Hodge structures and maps a K\"ahler class of $X$ to a K\"ahler class, there exists an automorphism $f\in\Aut(X)$ such that $f^*=\phi$.

It is difficult to construct interesting automorphisms of irreducible holomorphic symplectic manifolds. We refer to Beauville~\cite{BeauvilleInvol,BeauvilleKaehler}, Boissi\`ere--Nieper-Wi{\ss}kirchen--Sarti~\cite{BNWS}, Camere~\cite{Camere}, O'Grady~\cite{OGrady} and Oguiso--Schr\"oer~\cite{OS}   for some constructions concerning Hilbert schemes of two points on K3 surfaces, generalized Kummer varieties, Fano varieties of lines on cubic fourfolds, double covers of EPW sextics or O'Grady's resolutions of some moduli spaces of sheaves on K3 or abelian surfaces. The situation is particularly interesting when $X$ is the Hilbert scheme $S^{[n]}$ of $n$~points on a K3 surface $S$. In this case Beauville~\cite[Proposition~10]{BeauvilleKaehler} proved that the map $\Phi$ is injective, and Markman~\cite{MarkmanIntConst} obtained a very precise characterization of the group of monodromy operators that could, in the near future, help constructing new automorphisms. There is for the moment one standard way to construct automorphisms of $S^{[n]}$ by starting from automorphisms of $S$, yielding a morphism $\Aut(S)\to\Aut(S^{[n]})$ whose image consists of \emph{natural}~\cite{Boissiere} automorphisms of $S^{[n]}$. The first main result of this paper (Theorem~\ref{th:naturel}) is a characterization of the automorphisms obtained this way. It is easy to see that all these automorphisms leave invariant the class in $H^2(S^{[n]},\IZ)$ of the exceptional divisor of $S^{[n]}$, so it is natural to ask whether this condition is sufficient: we give in Theorem~\ref{th:naturel} a positive answer. The proof given here relies on some relations between the K\"ahler cones of $S$ and $S^{[n]}$ stated in Lemma~\ref{lem:Kal}. 
Note that a precise conjecture for the determination of the K\"ahler cone of $S^{[n]}$ is provided by Hassett--Tschinkel~\cite[Conjecture~1.2]{HT4}.

Denote by $\Bir(X)$ the group of bimeromorphic transformations of an irreducible holomorphic symplectic manifold $X$. If $X$ is non-projective, Oguiso~\cite[Theorem~1.5]{OguisoBir},~\cite[Theorem~1.4]{OguisoTits},~\cite[Theorem~1.6]{OguisoEntr} proved that the groups $\Aut(X)$ and $\Bir(X)$ are finitely generated, by showing that they are \emph{almost abelian of finite rank} (meaning that they are isomorphic to some $\IZ^r$, up to finite kernel and cokernel) and constructed projective examples where both groups are not almost abelian of finite rank. The second main result of this paper (Theorem~\ref{th:finitetype}) answers a question of Oguiso~\cite{OguisoTits} by showing that when $X$ is projective, the group $\Bir(X)$ is finitely generated. The proof relies on the recent proof of the global Torelli theorem by Verbitsky and Markman~\cite{MarkmanTorelli,Verbitsky}.

We thank Daniel Huybrechts and Manfred Lehn for their remarks, and the referee for helpful comments and references improving the paper.

\section{Basic tools on holomorphic symplectic manifolds}
\label{s:rappels}
Let $X$ be an irreducible holomorphic symplectic manifold of dimension $2n$ (with $n\geq 1$).
In the Hodge decomposition $H^2(X,\IC)=H^{2,0}(X)\oplus H^{1,1}(X)\oplus H^{0,2}(X)$ we put $H^{1,1}(X)_\IR:=H^{1,1}(X)\cap H^2(X,\IR)$. The  \emph{K\"ahler cone} $\cK_X$ is the open convex cone in $H^{1,1}(X)_\IR$ of classes which can be represented by a positive closed $(1,1)$-form.

We denote by $q_X$ the canonical \emph{Beauville--Bogomolov} \cite{Beauvillec1nulF} symmetric bilinear form on $H^2(X,\IZ)$ (we keep this notation also for its extension to $H^2(X,\IR)$ and $H^2(X,\IC)$ and for the associated quadratic form). This form is non-degenerate of signature $(3,b_2(X)-3)$ on $H^2(X,\IR)$ and it is such that $H^{1,1}(X)$ is orthogonal to $H^{2,0}(X)\oplus~H^{0,2}(X)$. The restriction of $q_X$ to $H^{1,1}(X)_\IR$ has signature $(1,b_2(X)-3)$ and for every K\"ahler class $\omega\in\cK_X$, we have $q_X(\omega)>0$.

Put $\cS_X:=\{\alpha\in H^{1,1}(X)_\IR\,|\,q_X(\alpha)>0\}$. The signature of $q_X$ implies that $\cS_X$ is the disjoint union of two open convex cones: the one containing the K\"ahler cone is denoted by $\cC_X$ and called the  \emph{positive cone}; the other component is denoted by $\cC'_X$ and we have the property: $x\in\cC_X \Leftrightarrow (-x)\in\cC'_X$. 

If $\omega\in\cK_X$ then $q_X(\omega,\cdot)$ is strictly positive on $\cC_X$ and for any effective divisor $D$ on $X$,
we have $q_X(\omega,[D])>0$ \cite[\S1.11]{HuybrechtsInvent}. In the particular case when $X$ is a K3 surface, this property characterizes the K\"ahler cone~\cite{BarthPetersVen}: if  $\omega\in\cC_X$ is such that $q_X(\omega,d)>0$ for any class $d$ of an effective divisor on $X$ such that $d^2=-2$, then $\omega\in\cK_X$.

Denote by $\cA_X\subset H^{1,1}(S)_\IR$ the \emph{ample cone}, generated by the first Chern classes  $c_1(L)$ of the ample line bundles $L$ on $X$. We have $\cA_X\subset\cK_X$. 

Let $\Peftr(X)\subset H^{1,1}(X,)_\IR$ be the set of \emph{pseudo-effective transcendental classes}, \ie  classes which can be represented by a closed positive $(1,1)$-current. Results of Debarre~\cite[\S3.3]{DebarreBrki}, Huybrechts~\cite[Proposition 1]{HuybrechtsInventErr} and Boucksom~\cite{Boucksom} show that it is a convex closed cone such that $\cK_X\subset~\Peftr(X)$ and $\overline{\cC_X}\subset\Peftr(X)$.

The \emph{N\'eron-Severi group} is $\NS(X):=H^{1,1}(X)_\IR\cap H^2(X,\IZ)$, of \emph{Picard number} $\rho(X):=\rank(\NS(X))$ and the \emph{transcendental lattice} $\Tr(X)$ is the orthogonal complement of $\NS(X)$ in $H^2(X,\IZ)$. We denote the signature of a lattice by $(n_1,n_2,n_3)$ where $n_1$ is the number of positive eigenvalues, $n_2$ of the zero eigenvalues and $n_3$ of the negative eigenvalues of the associated real quadratic form. There are three possibilities:
\begin{itemize}
\item \emph{hyperbolic} type: $\NS(X)$ is non--degenerate, of signature $(1,0,\rho(X)-1)$ and $\Tr(X)$ has signature $(2,0,b_2(X)-\rho(X)-2)$;
\item \emph{parabolic} type: $\NS(X)\cap\Tr(X)$ is of dimension $1$, $\NS(X)$ has signature $(0,1,\rho(X)-1)$ and $\Tr(X)$ has signature $(2,1,b_2(X)-\rho(X)-3)$ ;
\item \emph{elliptic} type: $\NS(X)$ is negative definite, of signature $(0,0,\rho(X))$ and $\Tr(X)$ has signature $(3,0,b_2(X)-\rho(X)-3)$.
\end{itemize} 
By Huybrechts \cite[Theorem 3.11]{HuybrechtsInvent}, $X$ is projective if and only if $\NS(X)$ is hyperbolic.

\section{ The Hilbert scheme of points on a K3 surface}
\label{s:hilbert}

Let $S$ be a K3 surface (not necessarily algebraic) and $n\geq 2$. We denote by $S^n$ the product of $n$ copies of $S$, $p_i\colon S^n\to S$ the projection onto the $i$-th factor, $S^{(n)}:=S^n/\kS_n$ the symmetric quotient of $S$, where the
symmetric group $\kS_n$ acts by permutation of the variables, $\pi\colon S^n\to S^{(n)}$ the quotient map, $\Delta$ the union of all the diagonals of $S^n$ and $D:=\pi(\Delta)$ its image in $S^{(n)}$. We denote by $S^{[n]}$ the 
\emph{ Hilbert scheme} (or \emph{Douady space} if $S$ is not algebraic) which parametrizes the analytic subspaces of $S$ of  dimension zero and length $n$. By Beauville~\cite{Beauvillec1nulF}, $S^{[n]}$~is an irreducible holomorphic symplectic manifold. The \emph{Hilbert--Chow morphism} (\emph{Douady--Barlet morphism} in the non algebraic case) $\rho\colon S^{[n]}\to S^{(n)}$ is projective and bimeromorphic, it is a resolution of singularities. We denote by $E:=\rho^{-1}(D)$ the exceptional divisor, which is irreducible.

There exists an injective morphism $\iota\colon H^2(S,\IC)\to H^2(S^{[n]},\IC)$ such that 
$$
H^2(S^{[n]},\IC)=\iota\left(H^2(S,\IC)\right)\oplus\IC[E]
$$
(we set $e:=[E]$) which is constructed as follows: for $\alpha\in H^2(S,\IC)$, there exists a unique $\beta\in H^2(S^{(n)},\IC)$ such that $\pi^*\beta=p_1^*\alpha+\cdots+p_n^*\alpha$ and we put  $\iota(\alpha):=\rho^*\beta$. The morphism $\iota$ is compatible with the Hodge decomposition~\cite[Proposition~6]{Beauvillec1nulF}. 
After normalisation, the form $q:=q_{S^{[n]}}$ satisfies $q(\iota(\alpha))=\alpha^2$ for $\alpha\in H^2(S,\IC)$, $q(e)=-8(n-1)$ and $e$ is orthogonal to $\iota\left(H^2(S,\IC)\right)$. There exists a class $\delta$ such that $2\delta=e$ and $H^2(S^{[n]},\IZ)=\iota\left(H^2(S,\IZ)\right)\oplus\IZ\delta$ (\loccit).

There exists a natural morphism of groups $-_n\colon\Pic(S)\to\Pic(S^{[n]})$ constructed as follows: for any line bundle $L\in\Pic(S)$, the line bundle $\bigotimes_{i=1}^np_i^*L$ projects to a line bundle $\cL$ on $\Pic(S^{(n)})$ and one defines $L_n:=\rho^*\cL$. By construction we have $c_1(L_n)=\iota(c_1(L))$. Denoting by $\left(\Pic(S)\right)_n$ the set of line bundles $L_n$, one has  $\Pic(S^{[n]})\cong\left(\Pic(S)\right)_n\oplus\IZ\cD$ with $\cD^2\cong\cO(-E)$ and $c_1(\cD)=-\delta$. The following lemma which compares the K\"ahler cones of $S$ and $S^{[n]}$ \textit{via} $\iota$ is the key to Theorem~\ref{th:naturel}.

\begin{lemma}\label{lem:Kal}\text{}
\begin{enumerate}
\item $\iota\left(\cC_S\right)\subset\cC_{S^{[n]}}$.

\item If $\omega:=\iota(\omega_0)+\lambda e\in\cC_{S^{[n]}}$, then $\omega_0\in\cC_S$. We have: 
$$
\iota\left(\cC_S\right)=~\cC_{S^{[n]}}\cap~\iota\left(H^{1,1}(S)_\IR\right).
$$

\item $\iota(\cK_S)\cap\cK_{S^{[n]}}=\emptyset$. If $\omega=\iota(\omega_0)+\lambda e\in\cK_{S^{[n]}}$ then $\lambda<0$ and $\omega_0\in\cK_S$. 
\end{enumerate}
\end{lemma}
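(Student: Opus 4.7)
My plan for all three parts relies on the identities $q(\iota(\alpha)) = q_S(\alpha)$, $q(e) = -8(n-1)$ and $e \perp \iota(H^2(S))$, together with the standard fact that $\iota(\omega_0) - \epsilon e \in \cK_{S^{[n]}}$ for any $\omega_0 \in \cK_S$ and any $\epsilon > 0$ small enough. Each claim will be established by exhibiting a path inside $\cS_{S^{[n]}} = \cC_{S^{[n]}} \sqcup \cC'_{S^{[n]}}$ joining the class under consideration to one already known to sit in the correct component. For (1), I would fix $\alpha \in \cC_S$ and an auxiliary $\omega_0 \in \cK_S$ and consider the segment $\eta_t := \iota((1-t)\alpha + t\omega_0) - t\epsilon e$, $t \in [0,1]$, joining $\iota(\alpha)$ to $\iota(\omega_0) - \epsilon e$. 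Convexity of $\cC_S$ gives a strictly positive lower bound $c$ for $t \mapsto q_S((1-t)\alpha + t\omega_0)$ on $[0,1]$; choosing $\epsilon$ so small that $\iota(\omega_0) - \epsilon e \in \cK_{S^{[n]}}$ and $8(n-1)\epsilon^2 < c$, orthogonality yields $q(\eta_t) = q_S((1-t)\alpha + t\omega_0) - 8(n-1)t^2\epsilon^2 > 0$ throughout, placing $\iota(\alpha)$ in the component of $\eta_1 \in \cK_{S^{[n]}} \subset \cC_{S^{[n]}}$.

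For (2), starting from $\omega = \iota(\omega_0) + \lambda e \in \cC_{S^{[n]}}$, I would first note that $q(\omega) = q_S(\omega_0) - 8(n-1)\lambda^2 > 0$ forces $q_S(\omega_0) > 0$. The segment $\iota(\omega_0) + t\lambda e$, $t \in [0,1]$, then has $q = q_S(\omega_0) - 8(n-1) t^2 \lambda^2 \geq q(\omega) > 0$, so $\iota(\omega_0)$ lies in the same component as $\omega$, namely $\cC_{S^{[n]}}$. Since (1) applied to $\pm \omega_0$ shows $\iota(\cC'_S) = -\iota(\cC_S) \subset \cC'_{S^{[n]}}$, the condition $\iota(\omega_0) \in \cC_{S^{[n]}}$ forces $\omega_0 \in \cC_S$. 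The equality $\iota(\cC_S) = \cC_{S^{[n]}} \cap \iota(H^{1,1}(S)_\IR)$ then follows from (1) together with the $\lambda = 0$ case of the argument just given.

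For (3), the disjointness $\iota(\cK_S) \cap \cK_{S^{[n]}} = \emptyset$ is geometric: for $\omega_0 \in \cK_S$, the class $\iota(\omega_0) = \rho^*\beta$ integrates to zero on every curve inside $E$ contracted by $\rho$, contradicting the strict positivity of a K\"ahler class along a compact curve. Given $\omega = \iota(\omega_0) + \lambda e \in \cK_{S^{[n]}}$, the inclusion $\cK_{S^{[n]}} \subset \cC_{S^{[n]}}$ and (2) yield $\omega_0 \in \cC_S$; the positivity $q(\omega, e) > 0$ from Huybrechts' result applied to the effective divisor $E$ gives $-8(n-1)\lambda > 0$, whence $\lambda < 0$. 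To promote $\omega_0 \in \cC_S$ to $\omega_0 \in \cK_S$ I would invoke the characterization of the K3 K\"ahler cone recalled in Section~\ref{s:rappels}: it suffices to check $\omega_0 \cdot D > 0$ for every effective divisor $D$ on $S$ with $D^2 = -2$. For such $D$, with $L := \cO_S(D)$ and a defining section $s$ of $L$, the $\kS_n$-invariant section $p_1^*s \otimes \cdots \otimes p_n^*s$ on $S^n$ descends to $S^{(n)}$ and pulls back to an effective section of $L_n = \rho^*\cL$ on $S^{[n]}$, yielding an effective divisor of cohomology class $\iota([D])$; Huybrechts' positivity then gives $q(\omega, \iota([D])) > 0$, which collapses via $e \perp \iota(H^2(S))$ to $q_S(\omega_0, [D]) = \omega_0 \cdot [D] > 0$.

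The hardest step will be this final promotion in (3), which requires constructing effective divisors on $S^{[n]}$ lifting effective $(-2)$-classes on $S$ with the correct Beauville-Bogomolov class and then invoking the K3-specific characterization of the K\"ahler cone. The remaining pieces reduce to convexity and connectedness arguments in $\cS_{S^{[n]}}$, modulo the standard existence of K\"ahler classes of the form $\iota(\omega_0) - \epsilon e$ on $S^{[n]}$.
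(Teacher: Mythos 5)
Your parts (2) and (3) essentially reproduce the paper's own arguments. Part (2) is the same computation (the paper concludes by convexity --- if $\omega_0\in\cC'_S$ then $\omega+\iota(-\omega_0)=\lambda e$ would lie in $\cC_{S^{[n]}}$, which is absurd since $q(\lambda e)\leq 0$ --- rather than by your sign/component argument, but the content is identical). Part (3) is exactly the paper's proof: $q(\omega,e)=-8\lambda(n-1)>0$ forces $\lambda<0$, and the promotion of $\omega_0$ from $\cC_S$ to $\cK_S$ goes through effective divisors on $S$ and the K3 characterization of the K\"ahler cone; the effectivity of $\iota([D])$, which the paper asserts in one line, is correctly justified by your descent of $p_1^*s\otimes\cdots\otimes p_n^*s$. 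Where you genuinely diverge is part (1). The paper pulls $\iota(\omega)$ back to a resolution of the isospectral Hilbert scheme $Z_n$, identifies it there with the pullback of the K\"ahler class $p_1^*\omega+\cdots+p_n^*\omega$ on $S^n$, deduces $\iota(\omega)\in\Peftr(S^{[n]})$, and uses $\overline{\cC_{S^{[n]}}}\subset\Peftr(S^{[n]})$ to exclude the component $\cC'_{S^{[n]}}$ (then connectedness of $\cC_S$ finishes, as in your path argument). You instead anchor everything at the ``standard fact'' that $\iota(\omega_0)-\epsilon e$ is K\"ahler on $S^{[n]}$ for small $\epsilon$. That fact is true --- it follows from the projectivity of the Douady--Barlet morphism $\rho$ over the K\"ahler space $S^{(n)}$, a relatively ample class plus the pullback of a K\"ahler class being K\"ahler --- but it is precisely the point the paper's argument is engineered to avoid: the remark following the lemma only establishes it for \emph{projective} $S$ (via Catanese--G\"ottsche), whereas the lemma must cover non-algebraic K3 surfaces, and part (3) cannot supply it since it constrains the shape of K\"ahler classes without producing any. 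So your proof is correct modulo a proper reference for this existence statement in the non-projective K\"ahler setting; as written, the weight of the proof of (1) has been shifted onto an uncited input of comparable depth to the pseudo-effectivity argument it replaces.
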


\begin{proof}\text{}
\par{1.} Let $Z_n$ be the \emph{isospectral Hilbert scheme}, which is defined by Haiman~\cite{Haiman} as the reduced fiber product of $S^{[n]}$ with $S^n$ over $S^{(n)}$  and $\widetilde{Z_n}\to Z_n$ a resolution of singularities. We have a commutative diagram
$$
\xymatrix{\widetilde{Z_n}\ar[r]^g \ar[d]_f & S^n\ar[d]_\pi\\S^{[n]}\ar[r]^\rho & S^{(n)}}
$$
where $f$ and $g$ are surjective. For each $\alpha\in H^2(S,\IC)$, one sees immediately that $\iota(\alpha)$ is such that $f^*\iota(\alpha)=g^*(p_1^*\alpha+\cdots+p_n^*\alpha)$. For $\omega\in\cK_S$,  $\omega_n:=p_1^*\omega+\cdots+p_n^*\omega$ is a K\"ahler class of $S^n$ hence $g^*(\omega_n)$ is pseudo-effective, so $f^*(\iota(\omega))\in~\Peftr(\widetilde{Z_n})$. Since pseudo-effectivity is stable by considering the preimage (in the sense of currents, see  Debarre~\cite[\S3.1]{DebarreBrki}) by surjective maps between compact varieties --- a class is pseudo-effective if and only if its preimage is ---, this implies that $\iota(\omega)\in~\Peftr(S^{[n]})$, which excludes that $\iota(\omega)$ is contained in the connected component $\cC'_{S^{[n]}}$. We conclude that $\iota(\cC_S)\subset\cC_{S^{[n]}}$.

\par{2.} If $\omega=\iota(\omega_0)+\lambda e\in\cC_{S^{[n]}}$, we have $\omega_0^2-8\lambda^2(n-1)=q(\omega)>0$ hence $\omega_0^2>0$. Assume that $\omega_0\in\cC'_S$. Then $(-\omega_0)\in\cC_S$ so by the first assertion and by convexity we get $\omega+\iota(-\omega_0)=\lambda e\in\cC_{S^{[n]}}$ which is absurd, hence $\omega_0\in\cC_S$. 

\par{3.} If $\omega\in\cK_S$, then $q(\iota(\omega),e)=0$ hence $\iota(\omega)$ is not a K\"ahler class for $S^{[n]}$. If $\omega=\iota(\omega_0)+\lambda e\in\cK_{S^{[n]}}$, we have $0<q(\omega,e)=-8\lambda(n-1)$ since $e$ is the class of an effective divisor, which implies $\lambda<0$. By the second assertion we have $\omega_0\in\cC_S$. If $\omega_0\notin\cK_S$ there exists an effective divisor $D$ on $S$ such that $\omega_0\cdot[D]\leq 0$. Hence $\iota([D])$ is again the class of an effective divisor and we have $q(\omega,\iota([D]))=\omega_0\cdot[D]\leq 0$, contradiction.
\end{proof}

\begin{remark} The argument used for the first assertion has been communicated to us by Daniel Huybrechts. If $S$ is
 algebraic (hence projective), one can argue in a different way, avoiding the use of currents. Denote by $\Xi_n\subset~S\times~S^{[n]}$ the universal family and $p,q$ the respective projections on $S$ and  $S^{[n]}$; the morphism $p$ is flat. For $L\in\Pic(S)$, put $\psi(L):=\det(q_*p^*L)\in\Pic(S^{[n]})$. Beltrametti--Sommese~\cite[Theorem~A.1]{BelSom} proved that the map $\psi\colon \Pic(S)\to\Pic(S^{[n]})$ satisfies:
$$
\psi(L)=L_n\otimes\cO(-E).
$$
By Catanese--G{\"o}ttsche~\cite{CatGoe}, $\psi(L)$ is very ample if and only if  $L$ is $n$-very ample, which means that for any zero dimensional subscheme $\xi\subset S$  of length less than or equal to $n$, the canonical map $H^0(S,L)\to~H^0(S,L\otimes\cO_\xi)$ is surjective. In particular, if $L$ is ample then for $k$ big enough $L^k$ is very ample and $L^{kn}$ is $n$-very ample \cite[Lemma~0.1.1]{BelSom}, hence $\psi(L^{k n})$ is very ample. Since $S$ is projective, there exists a very ample line bundle $L$, hence $c_1(L)\in\cA_S$. We get: 
$$
c_1(\psi(L^{k n}))=k n\cdot\iota(c_1(L))-e\in\cA_{S^{[n]}},
$$
which excludes that $\iota(c_1(L))\in\cC'_{S^{[n]}}$ otherwise by convexity we would have $e\in\cC'_{S^{[n]}}$.
\end{remark}

\section{Classification of natural automorphisms}

Let $S$ be a K3 surface and $n\geq 2$. Any automorphism $\psi\in\Aut(S)$ induces an automorphism denoted $\psi^{[n]}\in\Aut(S^{[n]})$, called \emph{natural}, and the induced morphism ${\Aut(S)\to\Aut(S^{[n]})}$ is injective~\cite{Boissiere}. By the relation $\psi^{[n]}\circ\rho=\rho\circ\psi^{(n)}$, where $\psi^{(n)}$ is  the automorphism of $S^{(n)}$ induced by $\psi$, and by the fact that a natural automorphism  leaves globally invariant the exceptional divisor $E$, one obtains that the action of $\psi^{[n]}$ on $H^2(S^{[n]},\IZ)$ can be decomposed as $(\psi^{[n]})^*=(\psi^*,\id)$ in the decomposition $H^2(S^{[n]},\IZ)\cong H^2(S,\IZ)\oplus\IZ\delta$ (where $\iota$ is implicit). 

Let $f\in\Aut(S^{[n]})$. Since the exceptional divisor $E$ is ridig, the geometric property $f(E)=E$ (\ie $E$ is globally invariant) is equivalent to the algebraic property $f^*e=e$. We prove that this only condition characterizes the natural automorphisms.

\begin{theorem}\label{th:naturel}
Let $S$ be a K3 surface and $n\geq 2$. An automorphism $f$ of $S^{[n]}$ is natural if and only if it leaves globally invariant the exceptional divisor.
\end{theorem}

\begin{proof} The automorphism $f$ induces an isometry  $f^*$ of the lattice $\left(H^2(S^{[n]},\IZ),q\right)$ and if $f$ leaves globally invariant the exceptional divisor, we have $f^*(\delta)=\delta$. Since $\delta$ is orthogonal to $\iota\left(H^2(S,\IZ)\right)$ for the form $q$, $f^*$ stabilizes $\iota\left(H^2(S,\IZ)\right)$ hence it decomposes as $f^*=(\varphi,\id)$ where $\varphi$ is a Hodge isometry of the lattice $H^2(S,\IZ)$ since $\iota$ is compatible with the Hodge decomposition, as explained in the \S\ref{s:hilbert}.

Let $\omega\in\cK_{S^{[n]}}$. By Lemma \ref{lem:Kal} one can decompose it as  $\omega=\iota(\omega_0)+\lambda e$ with $\omega_0\in\cK_S$. Then  
$$
f^*(\omega)=\iota(\varphi(\omega_0))+\lambda e\in\cK_{S^{[n]}},
$$
so by Lemma~\ref{lem:Kal} again it follows that $\varphi(\omega_0)\in\cK_S$. By the global Torelli theorem for K3 surfaces, the effective Hodge isometry $\varphi$ is induced by a unique automorphism~$\psi$ of $S$ such that $\psi^*=\varphi$.

The natural automorphism $\psi^{[n]}$ of $S^{[n]}$ induced by $\psi$ satisfies $(\psi^{[n]})^*(\delta)=\delta$ and $(\psi^{[n]})^*_{|H^2(S,\IZ)}=\varphi$ so $(\psi^{[n]})^*=f^*$ on $H^2(S^{[n]},\IZ)$. By Beauville~\cite[Proposition~10]{BeauvilleKaehler} the map $\Aut(S^{[n]})\to \Orth(H^2(S^{[n]},\IZ))$ is injective, so $f=\psi^{[n]}$.
\end{proof}

\begin{remark} A similar classification result of automorphisms of generalized Kummer varieties is proven by Boissi\`ere--Nieper-Wi{\ss}kirchen--Sarti~\cite[Theorem 4.1]{BNWS}. The proof there uses different techniques since the 
generalized Kummer varieties admit non trivial automorphisms which act trivially on the second cohomology group \cite[Corollary 4.3]{BNWS}.
\end{remark}

\section{Applications and examples}

Let $S$ be a K3 surface and $n\geq 2$. For $f\in\Aut(S^{[n]})$, we define the \emph{index} of $f$ by:
$$
\lambda(f):=\frac{q(f^*(e),e)}{q(e)},
$$
in such a way that in $\NS(S^{[n]})$ we have $f^*(e)=\lambda(f) e + \iota(d)$ for some  $d\in\NS(S)$. Any natural automorphism has index~$1$ and for any natural automorphism $f$ and any automorphism ${g\in\Aut(S^{[n]})}$ we have:
$$
\lambda(f\circ g)=\lambda(g)=\lambda(g\circ f).
$$
In particular $\lambda(f\circ g \circ f^{-1})=\lambda(g)$ hence the index is invariant 
for the action of $\Aut(S)$ on $\Aut(S^{[n]})$ by conjugation. 

Boissi\`ere~\cite[Proposition~2]{Boissiere} proves by topological arguments that if ${\rho(S)=0}$ then all automorphisms of $S^{[n]}$ are natural. Theorem \ref{th:naturel} gives in particular an algebraic argument.
We study here the case $\rho(S)\geq 1$.

\begin{proposition}\label{hyperb}Let $S$ be a K3 surface and $n\geq 2$.
 If $S$ is of elliptic type, or of hyperbolic type with $\rho(S)=1$, then $f\in\Aut(S^{[n]})$ is natural if and only if $\lambda(f)=1$.
\end{proposition}

\begin{proof} If $\lambda(f)=1$, then $f^*(e)= e + \iota(d)$. Since $f^*$ is an isometry, we get $d^2=0$. If $S$ is of elliptic type, this implies that $d=0$ hence $f^*(e)=e$ and by Theorem \ref{th:naturel},  $f$~is a natural automorphism. The other implication is clear. In the hyperbolic case with $\rho(S)=1$, one has $\NS(S)\cong\IZ d$ with $d^2>0$ and the argument is similar.
\end{proof}

It would be interesting to understand if the invariant $\lambda\colon\Aut(S^{[n]})\to\IZ$ is enough to characterize the natural automorphisms. The previous result shows that 
this is the case for the generic algebraic K3 surface and in the elliptic case. It is not easy to find examples of non natural automorphisms of $S^{[n]}$. In fact there are, up to now, only two known examples. Take $S\subset\IP_3$ a generic K3 surface, containing no line.  Beauville~\cite[\S6]{BeauvilleKaehler} constructs an involution $i$ on $S^{[2]}$ as follows: for a reduced subscheme $\xi\in S^{[2]}$, the line $L$ through the two points in the support of $\xi$ cuts the surface $S$ in another length two subscheme $\xi'\in S^{[2]}$. One can prove that the obtained birational map $i\colon S^{[2]}\dashrightarrow S^{[2]}, \xi\mapsto\xi'$ extends to an automorphism. Denoting by $h$ the class of a hyperplane divisor, one computes that $i^*(\delta)=-3\delta+4h$ and $i^*(h)=-4\delta+3h$ (see Debarre \cite[Th\'eor\`eme 4.1]{Debarre} or Oguiso~\cite[Lemma 4.3]{OguisoRemark}). Since $\lambda(i)=-3$, this involution is not natural. The second example, which is inspired by the first one, is due to Oguiso (\loccit, see also Amerik~\cite{Amerik}). Consider a K3 surface $S$ admitting two embeddings as a quartic in $\IP_3$, given by two different very ample line bundles $H_1,H_2$, whose classes are denoted by $h_1,h_2$. Each embedding induces an involution $i_1,i_2$ on $S^{[2]}$ as before, satisfying the relations:
\begin{align*}
i_j^*h_j&=3h_j-4\delta,& i_j^*\delta&=-3\delta+2h_j,\quad j=1,2\\
i_1^*h_2&=8h_1-h_2-8\delta,& i_2^*h_1&=8h_2-h_1-8\delta.
\end{align*}
Consider the composition $i=i_1\circ i_2$. Then $i^*\delta=10h_1-2h_2-7\delta$ so $\lambda(i)=-7$ and $i$ is not natural (see also Oguiso~\cite[Lemma~4.6]{OguisoRemark}). One can easily see from these computations that $i$ is of infinite.

\section{The group of birational transformations}

Let $X$ be a projective irreducible holomorphic symplectic manifold. Using the global Torelli theorem of Verbitsky~\cite{Verbitsky} and Markman~\cite{MarkmanTorelli}, we prove that the group $\Bir(X)$ is finitely generated. Our proof is a generalisation of the argument of Sterk~\cite{Sterk} related to the automorphism group of projective K3 surfaces. 

Given $f\in\Bir(X)$, the correspondence by the closure in $X\times X$ of the graph of $f$ induces an automorphism $f^*\in\GL(H^2(X,\IZ))$. It is known (see Markman~\cite{MarkmanTorelli} and references therein) that $f^*\in\Orth(H^2(X,\IZ),q_X)$ and $f^*\in\Mon_{\Hdg}^2(X)$, where $\Mon_{\Hdg}^2(X)$ denotes the subgroup of $\Mon^2(X)$ of monodromy operators preserving the Hodge structure. Consider the group of birational transformations preserving the holomorphic two-form:
$$
\Bir^0(X):=\{f\in\Bir(X)\,|\, f^*\omega_X=\omega_X\}.
$$
Since every $f\in\Bir(X)$ induces a Hodge isometry of $H^2(X,\IZ)$ and $H^{2,0}(X)$ is generated by $\omega_X$, there is a character $\chi\colon\Bir(X)\longrightarrow \IC^*$
defined by $f^*\omega_X=\chi(f)\omega_X$ and ${\Bir^0(X)=\Ker(\chi)}$ is a normal subgroup of $\Bir(X)$.
 
\begin{lemma}\label{lem:firsttodo} Let $X$ be a projective irreducible holomorphic symplectic manifold. The quotient $\Bir(X)/\Bir^0(X)$ is a finite cyclic group.
\end{lemma}

\begin{proof} The argument is similar as in Sterk~\cite[Lemma~2.1]{Sterk} or Beauville~\cite[Proposition~7]{BeauvilleKaehler}.
Set $\Tr(X)_\IR:=\Tr(X)\otimes_\IZ\IR$ and $E:=(H^{2,0}(X)\oplus H^{0,2}(X))\cap H^2(X,\IR)$. There is an orthogonal decomposition:
$$
\Tr(X)_\IR=E\oplus (\Tr(X)_\IR\cap H^{1,1}(X)).
$$
Since $X$ is projective, $\Tr(X)$ is of signature $(2,0,\rho(X)-2)$. Note that $q_X$ is positive on $E$, so it is negative on the second space. The isometry $f^*\in\Orth(\Tr(X))$ induced by $f\in\Bir(X)$ preserves this decomposition and it is unitary on each space, so the eigenvalues of $f^*\in\Orth(\Tr(X)\otimes_\IZ\IC)$ have modulus $1$. Since they are algebraic integers, they are roots of the unity. In particular, the minimal polynomial of $\chi(f)$ is a cyclotomic polynomial $\Phi_n$ for some integer $n$. The polynomial $\Phi_n$ divides the characteristic polynomial of $f^*\in\GL(\Tr(X))$, so the Euler number of $n$ is smaller than or equal to $\rank(\Tr(X))$ and the possible values for $n$ are bounded. This shows that $\chi(\Bir(X))\subset\IC^*$ is a finite group.
\end{proof}

\begin{theorem}\label{th:finitetype} If $X$ is a projective irreducible holomorphic symplectic manifold, then $\Bir(X)$ is a finitely generated group.
\end{theorem}

\begin{proof} We follow Markman~\cite{MarkmanTorelli} to generalize Sterk's argument~\cite[Proposition~2.2]{Sterk}. By Lemma~\ref{lem:firsttodo}, it suffices to prove that $\Bir^0(X)$ is finitely generated. Consider the restriction morphism:
$$
\rho\colon\Mon^2_{\Hdg}(X)\longrightarrow \Orth(\NS(X)).
$$
Set $\cC_{\NS}:=\cC_X\cap\NS(X)$ and $\Orth^{+}(\NS(X)):=\{g\in\Orth(\NS(X))\,|\,g(\cC_{\NS})=\cC_{\NS}\}$. By Markman~\cite[Lemma~6.23]{MarkmanTorelli}, the group ${\Gamma:=\im(\rho)}$ is an arithmetic subgroup of finite index in $\Orth^+(\NS(X))$, so $\Gamma$ is finitely generated~\cite[Theorem~6.12]{BHC}. Define:
$$
\Gamma_{\Tr}:=\left\{g\in\Gamma\,|\,g_{|\Tr(X)}=\id\right\}.
$$
The group $\Gamma_T$ is an arithmetic subgroup of $\Gamma$, so it is again finitely generated. Denote by $\Pex$ the set of prime exceptional divisors of $X$ (\ie the set of reduced irreducible effective divisors of $X$ whose Gram matrix is negative definite). Set $W_{\Exc}$ the subgroup of $\Mon^2_{\Hdg}(X)$ generated by reflections by elements of $\Pex$ \cite[Definition~6.8, Theorem~6.18]{MarkmanTorelli}. Define:
\begin{align*}
\Gamma_{\Bir}&:=\{g\in\Gamma\,|\, g(\Pex)=\Pex\},\\
\Gamma_{\Tr,\Bir}&:=\{g\in\Gamma_{\Tr}\,|\, g(\Pex)=\Pex\}. 
\end{align*}
One has a semi-direct decomposition $\Gamma\cong \rho(W_{\Exc})\rtimes \Gamma_{\Bir}$ \cite[Lemma~6.23]{MarkmanTorelli}.
Since elements of $W_{\Exc}$ act trivially on $\Tr(X)$, one deduces that $\Gamma_{\Tr}\cong \rho(W_{\Exc})\rtimes \Gamma_{\Tr,\Bir}$ so $\Gamma_{\Tr,\Bir}$ is a quotient of $\Gamma_{\Tr}$, hence it is finitely generated. 

Denoting by $\Mon^2_{\Bir}(X)$ the subgroup of $\Mon^2_{\Hdg}(X)$ of monodromy operators induced by birational transformations of $X$, one has $\Ker(\rho)\subset \Mon^2_{\Bir}(X)$ and $\Gamma_{\Bir}\cong \Mon^2_{\Bir}(X)/\Ker(\rho)$ (see \cite[Lemma~6.23 and (3) p.26]{MarkmanTorelli}), so:
\begin{align*}
\Gamma_{\Tr,\Bir}&\cong\left\{g\in\Mon^2_{\Bir}(X)\,|\,g_{|\Tr(X)}=\id\right\}/\left\{g\in\Mon^2_{\Bir}(X)\,|\,g_{|\NS(X)}=\id\right\}\\
&\cong \left\{f\in\Bir(X)\,|\,f^*_{|\Tr(X)}=\id\right\}/\left\{f\in\Bir(X)\,|\,f^*_{|H^2(X,\IZ)}=\id\right\}.
\end{align*}
Since $X$ is projective, one has $\Bir^0(X)\cong\left\{f\in\Bir(X)\,|\,f^*_{|\Tr(X)}=\id\right\}$ and it is well-known that $\left\{f\in\Bir(X)\,|\,f^*_{|H^2(X,\IZ)}=\id\right\}$ is a finite group (see \cite[Proposition~9.1]{HuybrechtsInvent}): a birational transformation acting trivially on the second cohomology space leaves invariant a K\"ahler class, so extends to an automorphism by Fujiki's theorem and is an isometry for the Calabi--Yau metric uniquely associated to this K\"ahler class; the assertion follows then from the fact that the group of isometries of a compact Riemannian manifold is compact and $\Aut(X)$ is discrete. As a consequence, $\Bir^0(X)$ is finitely generated.
\end{proof}

As mentioned by Oguiso~\cite[Question~1.6]{OguisoTits}, it is not known whether $\Aut(X)$ is of finite index in $\Bir(X)$, so the following question remains open:

\begin{question}
For $X$ a projective irreducible holomorphic symplectic manifold, is the group $\Aut(X)$ finitely generated?
\end{question}

The key difference between understanding automorphisms and birational transformations of $X$ via various types of subgroups of $\Orth(\NS(X))$ is that, in the moduli space of marked irreducible holomorphic symplectic manifolds, two elements with the same period are always bimeromorphic \cite[Theorem~4.3]{HuybrechtsInvent} but, in contrary to the case of K3 surfaces, in higher dimension they are not automatically isomorphic. This also explains why constructing automorphisms of $X$ from the action on the lattice $H^2(X,\IZ)$ is much more difficult for higher dimensional irreducible holomorphic symplectic manifolds than for K3 surfaces.

\bibliographystyle{amsplain}
\bibliography{BiblioAutHilbK3}

\end{document}